\newcommand{\bburl}[1]{\textcolor{blue}{\url{#1}}}
\newcommand{\monthyear}[1]{%
  \def\@monthyear{\uppercase{#1}}}
\newcommand{\volnumber}[1]{%
  \def\@volnumber{\uppercase{#1}}}
\def\ps@plain{\ps@empty
  \def\@oddfoot{\@monthyear \hfil \thepage}%
  \def\@evenfoot{\thepage \hfil \@volnumber}}
\def\ps@firstpage{\ps@plain}
\def\ps@headings{\ps@empty
  \def\@evenhead{%
    \setTrue{runhead}%
    \def\thanks{\protect\thanks@warning}%
    \uppercase{\ }\hfil}%
  \def\@oddhead{%
    \setTrue{runhead}%
    \def\thanks{\protect\thanks@warning}%
    \hfill\uppercase{Generalized Schreier sets, linear recurrence relation, Tur\'{a}n graphs}}%
  \let\@mkboth\markboth
  \def\@evenfoot{%
    \thepage \hfil \@volnumber}%
  \def\@oddfoot{%
    \@monthyear \hfil \thepage}%
  }%
\theoremstyle{plain}
\numberwithin{equation}{section}
\newtheorem{thm}{Theorem}[section]
\newcommand{\seqnum}[1]{\href{https://oeis.org/#1}{\underline{#1}}}
\newcommand{\ignore}[1]{}
\newcommand\be{\begin{eqnarray}}
\newcommand\ee{\end{eqnarray}}
\newcommand\bea{\begin{eqnarray}}
\newcommand\eea{\end{eqnarray}}
\newcommand\ben{\begin{enumerate}}
\newcommand\een{\end{enumerate}}
\newtheorem{lem}[thm]{Lemma}
\begin{document}

\monthyear{}
\volnumber{Volume, Number}
\setcounter{page}{1}
\title{Generalized Schreier sets, linear recurrence relation, Tur\'{a}n graphs}

\author{Kevin Beanland}
\author{H\`ung Vi\d{\^e}t Chu}
\author{Carrie E. Finch-Smith}

\address{Department of Mathematics, Washington and Lee University, Lexington, VA 24450, USA} \email{beanlandk@wlu.edu}
\address{Department of Mathematics, University of Illinois at Urbana-Champaign, Urbana, IL 61820} \email{hungchu2@illinois.edu}
\address{Department of Mathematics, Washington and Lee University, Lexington, VA 24450, USA}\email{finchc@wlu.edu}

\date{\today}

\begin{abstract}
We prove a linear recurrence relation for a large family of generalized Schreier sets, which generalizes the Fibonacci recurrence proved by Bird and higher order Fibonacci recurrence proved by the second author et al. Furthermore, we show a relationship between Schreier-type sets and Tur\'{a}n graphs.  
\end{abstract}

\thanks{The author is thankful for the anonymous referee's careful reading of this article.}

\maketitle
\section{Introduction}
A finite set $F\subset\mathbb{N}$ is said to be \textit{Schreier} if $\min F \ge |F|$, where $|F|$ is the cardinality of $F$. The namesake of Schreier sets is J\'osef Schreier who introduced these sets in the construction of a Banach space solving a problem of Banach and Saks \cite{S}. In a blog post \cite{B}, Alistair Bird showed that the Fibonacci
sequence appears if we count Schreier sets under certain conditions. In particular, if we set 
$S_{n} := \{F \subset \mathbb{N}: \min F \ge |F|\mbox{ and }\max F = n\}$, then
$|S_{1}|= 1, |S_2|= 1$, and $|S_{n+2}|= |S_{n+1}|+|S_{n}|$ for all $n \ge 1$. There has been research on generalizing Bird's result to  higher order recurrences (see \cite[Theorems 4, 5, 6]{C1} and \cite[Theorems 1.1, 1.3]{C2}) and on investigating the relationship between Schreier-type sets and partial sums of the Fibonacci and Gibonacci sequences \cite{C3, M}. 

The first main result of this paper proves a recurrence relation from a large family of generalized Schreier sets. 
For $(p,q,n) \in \mathbb{N}^3$, we define
$$S^{p/q}_n\ =\ \{ F \subset \mathbb{N} : q \min F\ge p|F|\mbox{ and } \max F=n\}.$$
Observe that $S^{p/1}_n$ is a special case considered in \cite[Theorem 1.1]{C2}.

\begin{thm}\label{mt1}
Let $(p,q) \in \mathbb{N}^2$. For $n \in \mathbb{N}$ with $n \ge p+q$, we have 
\begin{equation}\label{e10}|S^{p/q}_{n}| \ =\ \sum_{k=1}^q (-1)^{k+1}\binom{q}{k}|S^{p/q}_{n-k}| + |S^{p/q}_{n-(p+q)}|.\end{equation}
\end{thm}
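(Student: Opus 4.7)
The plan is to first reduce $|S_n^{p/q}|$ to a sum of binomial coefficients by stratifying by cardinality, then verify the claimed identity by substituting this closed form and applying a standard alternating binomial sum.

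For a set $F$ with prescribed size $|F| = k$, the condition $q \min F \ge p|F|$ is equivalent to $\min F \ge \lceil pk/q\rceil$. Since $\max F = n$ is already fixed, the remaining $k-1$ elements must be chosen from the interval $\{\lceil pk/q\rceil, \lceil pk/q\rceil + 1, \dots, n-1\}$. Summing over all possible sizes, I would establish the enumeration
$$|S_n^{p/q}| \;=\; \sum_{k \ge 1} \binom{n - \lceil pk/q \rceil}{k-1},$$
where the convention $\binom{a}{b} = 0$ when $b < 0$ or $a < b$ is used (this also handles the degenerate range where $n$ is too small to accommodate a set of size $k$).

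After rewriting \eqref{e10} in the form $\sum_{k=0}^q (-1)^k \binom{q}{k} |S_{n-k}^{p/q}| = |S_{n-p-q}^{p/q}|$, I would substitute the closed form above and swap the order of summation. The inner sum over $k$ matches the shape
$$\sum_{k=0}^q (-1)^k \binom{q}{k} \binom{M-k}{r} \;=\; \binom{M-q}{r-q},$$
which follows from extracting the coefficient of $x^r$ in the identity $\sum_{k=0}^q (-1)^k \binom{q}{k} (1+x)^{M-k} = x^q(1+x)^{M-q}$. Applying this with $M = n - \lceil pj/q\rceil$ and $r = j-1$ turns the LHS into $\sum_{j \ge 1} \binom{n - q - \lceil pj/q\rceil}{j - 1 - q}$.

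To finish, I would reindex $j = j' + q$. The key algebraic observation is that because $p$ is an integer,
$$\lceil p(j'+q)/q \rceil \;=\; \lceil pj'/q + p\rceil \;=\; p + \lceil pj'/q \rceil,$$
so the sum becomes $\sum_{j' \ge 1-q} \binom{(n-p-q) - \lceil pj'/q\rceil}{j' - 1}$. The boundary contributions $j' \in \{1-q, \dots, 0\}$ vanish because the lower binomial entry $j'-1$ is then strictly negative, and the remaining piece is exactly $|S_{n-p-q}^{p/q}|$. The hypothesis $n \ge p+q$ is used here to ensure all shifted arguments $|S_{n-k}^{p/q}|$ are indexed at nonnegative positions and that the reindexing stays in the range where the closed form is valid.

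The main obstacle is really just bookkeeping: correctly isolating the inner alternating sum so that the clean binomial identity applies, and then verifying that the shift $j \mapsto j - q$ combined with the integrality of $p$ converts the answer back into the same enumerating form evaluated at $n - p - q$, with all stray boundary terms automatically killed by the negative-bottom convention.
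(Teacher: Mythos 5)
Your proposal is correct, but it takes a genuinely different route from the paper. The paper never writes down a closed form: it proves the recurrence by inclusion--exclusion over the window $\{n-q,\dots,n-1\}$, using an explicit order-preserving bijection to show that the members of $S^{p/q}_n$ avoiding a fixed nonempty $G\subset\{n-q,\dots,n-1\}$ are equinumerous with $S^{p/q}_{n-|G|}$, and a shift-by-$p$ bijection to show that the members containing all of $\{n-q,\dots,n-1\}$ are equinumerous with $S^{p/q}_{n-(p+q)}$. Your route instead establishes the enumeration $|S^{p/q}_n|=\sum_{k\ge1}\binom{n-\lceil pk/q\rceil}{k-1}$ (which is correct, and is a pleasant by-product the paper does not state) and then verifies the recurrence by a finite-difference binomial computation. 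Each approach buys something: the paper's bijections are convention-free and explain combinatorially where the two kinds of terms come from; yours produces a closed form and reduces the theorem to a one-line identity.

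One point in your sketch needs more care than ``bookkeeping.'' The identity $\sum_{k=0}^q(-1)^k\binom{q}{k}\binom{M-k}{r}=\binom{M-q}{r-q}$, as justified via the expansion of $(1+x)^{M-k}$, is only literally valid when $M\ge q$, so that each $(1+x)^{M-k}$ is a polynomial whose coefficients are the binomials in your truncated convention; under your stated convention ($\binom{a}{b}=0$ whenever $a<b$, including $a<0\le b$) the identity is false in general --- for instance $q=1$, $M=r=0$ gives $1$ on the left and $0$ on the right. You apply it with $M=n-\lceil pj/q\rceil$ for every $j\ge1$, and $M$ is eventually negative. The gap is fillable, and the hypothesis $n\ge p+q$ is exactly what fills it: if the inner sum over $k$ has any nonzero term then $M\ge r=j-1$, and if in addition $M<q$ this forces $j\le q$, hence $\lceil pj/q\rceil\le p$ and $M\ge n-p\ge q$, a contradiction. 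So for every $j$ either $M\ge q$ (and the identity applies verbatim) or the inner sum and the target term are both zero. This, rather than the vaguer indexing reason you give, is where $n\ge p+q$ is really used; with that check inserted, your argument is complete.
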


If $p=q=1$, we have the Fibonacci recurrence stated above and proved by Bird. 
If $q=1$ and $p \in \mathbb{N}$, we have \cite[Theorem 1.1]{C2}
$$|S^p_{n}|\ =\ |S_{n-1}^p|+|S^p_{n-(p+1)}|.$$
The cases $q\in \mathbb{N}$ and $p=1$ are new and, in the authors' opinion,  unexpected and elegant. 
We also note that $p/q$ need not be in the simplified form. For different forms $p/q$, \eqref{e10} gives equivalent recurrences\footnote{This independence of the recurrence relation has the same spirit as \cite[Remark 1.5]{C2}, where the depth of the recurrence is independent of one of the parameters.}.

Our second result of this short note connects Schreier-type sets with Tur\'{a}n graphs. A Tur\'{a}n graph, denoted by $T(n,p)$, 
is the $n$-vertex complete $p$-partite graph whose parts differ in size by at most $1$. That is, $T(n,p)$ has $n$ vertices separated into  $p$ subsets, with sizes as equal as possible, and two vertices are connected by an edge if and only if they belong to different subsets \cite{T}. With an abuse of notation, we also write $T(n,p)$ to mean the number of edges of the corresponding graph. For each fixed $p\ge 2$, the sequence $(T(n,p))_{n=1}^\infty$ is available in OEIS \cite{Sln} (for example, see
\seqnum{A002620},
\seqnum{A000212},
\seqnum{A033436}, and
\seqnum{A033437}.)

Define $$Sr(n,p) \ =\ \left|\left\{F\subset [n]\,:\, p\min F\ge |F|\mbox{ and }F \mbox{ is an interval}\right\}\right|,$$
where $[n] = \{1, 2, \ldots, n\}$. Notice that in contrast to the definition of $S^{p/q}_n$, the definition of $Sr(n,p)$ does not require $\max F=n$.  

\begin{thm}\label{mt2}
For all $(n,p)\in \mathbb{N}^2$ with $n\ge p$, we have
$$Sr(n, p) \ =\ T(n+1, p+1).$$
\end{thm}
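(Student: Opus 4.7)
My plan is to compute both sides in closed form and verify they agree. The key reformulation is that an interval $[a,b]\subseteq [n]$ satisfies the Schreier-type condition $p\min F\ge |F|$ exactly when $b\le (p+1)a-1$, so
\[
Sr(n,p) \;=\; \sum_{a=1}^{n} \min\bigl\{pa,\; n-a+1\bigr\}.
\]

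I will introduce the decomposition $n+1=(p+1)q+r$ with $0\le r\le p$, which will govern both sides. The crossover $pa=n-a+1$ sits at $a=(n+1)/(p+1)$, so the min equals $pa$ for $a\le q$ and equals $n-a+1$ for $a\ge q+1$. Splitting the sum and using the closed form for triangular numbers, then substituting $n-q=pq+r-1$, one simplifies the total to an expression of the form
\[
\frac{p(p+1)q^2+2pqr+r(r-1)}{2}.
\]

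For the Tur\'{a}n side, the same decomposition pins down the part sizes: $r$ parts of size $q+1$ and $p+1-r$ parts of size $q$. Using the identity $T(n+1,p+1)=\tfrac{1}{2}\bigl[(n+1)^2-\sum_i s_i^2\bigr]$ with $\sum_i s_i^2 = r(q+1)^2+(p+1-r)q^2 = (p+1)q^2+2rq+r$, a direct expansion of $(n+1)^2-\sum_i s_i^2$ produces the same closed form $\tfrac{1}{2}\bigl[p(p+1)q^2+2pqr+r(r-1)\bigr]$, matching $Sr(n,p)$.

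The only non-trivial step is the parameter bookkeeping: one must verify that the split at $a=q$ is correct, i.e.\ that $pq\le n-q+1$ and $p(q+1)>n-q$, both of which follow immediately from the range $0\le r\le p$. I expect this to be the main (minor) obstacle; there is no deep idea, but the boundary cases $r=0$ and $r=p$ merit a brief check. A bijective proof sending each valid interval to an edge of the Tur\'{a}n graph is conceivable, but natural candidates (for instance $[a,b]\mapsto\{a-1,b\}$ under a mod-$(p+1)$ vertex labeling) fail to produce a valid edge in every instance, so the algebraic route appears cleanest.
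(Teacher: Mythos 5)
Your proposal is correct and follows essentially the same route as the paper: both reformulate $Sr(n,p)$ as $\sum_{a=1}^{n}\min\{pa,\,n-a+1\}$, split the sum at $\lfloor (n+1)/(p+1)\rfloor$, and compare the resulting closed form with the standard edge-count formula for the Tur\'{a}n graph (the paper quotes that formula as a lemma, whereas you rederive it from the part sizes, which is only a cosmetic difference). The algebra checks out, including the crossover verification via $0\le r\le p$.
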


\section{Proof of Theorem \ref{mt1}}

The main tool is the following lemma.

\begin{lem}
Fix $(p,q) \in \mathbb{N}^2$, $n\ge p+q$, and let $G \subset \{n-q, \ldots n-1\}$ be nonempty. Define $$A_{G}\ =\ \{F \in S^{p/q}_n: G \cap F =\emptyset\}.$$
Then $|A_G|\ =\ |S_{n-|G|}^{p/q}|$. 
\label{l3}
\end{lem}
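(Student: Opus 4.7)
The plan is to establish the lemma by constructing an explicit bijection $\varphi : A_G \to S_{n-|G|}^{p/q}$. Write $m := |G|$ and enumerate $\{1,2,\ldots,n\} \setminus G = \{a_1 < a_2 < \cdots < a_{n-m}\}$. I would take $\varphi$ to be the ``close the gaps'' map sending each $a_i \in F$ to $i$; equivalently, $\varphi(F) = \{x - |\{g \in G : g < x\}| : x \in F\}$. Between subsets of $\{1,\ldots,n\}$ disjoint from $G$ and subsets of $\{1,\ldots,n-m\}$ this is plainly a bijection preserving cardinalities, and since $G \subset \{n-q,\ldots,n-1\}$ forces $n \notin G$, the condition $\max F = n$ is equivalent to $\max \varphi(F) = n-m$.

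It therefore suffices to show that, within this bijection, the constraint $q\min F \ge p|F|$ is equivalent to $q\min \varphi(F) \ge p|\varphi(F)|$. One direction is immediate: since $a_i \ge i$ for all $i$, any $F = \varphi^{-1}(H)$ satisfies $\min F \ge \min H$, so $H \in S_{n-m}^{p/q}$ implies $F \in A_G$. The main obstacle is the reverse: for $F \in A_G$, I must verify $q\min\varphi(F) \ge p|F|$. Setting $s := |\{g \in G : g < \min F\}|$, one has $\min \varphi(F) = \min F - s$, and the hypothesis $q\min F \ge p|F|$ by itself cannot absorb the loss of $qs$ when $s \ge 1$.

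To overcome this, I would split on the size of $\min F$. If $\min F \le n - q$, then $s = 0$ (every element of $G$ is at least $n-q$), and the target inequality reduces to the hypothesis. Otherwise write $\min F = n - q + j$ with $1 \le j \le q$. The $s$ elements of $G$ below $\min F$ all lie in $\{n-q, \ldots, n-q+j-1\}$, giving $s \le j$; meanwhile $F \subset \{n-q+j, \ldots, n\} \setminus G$ together with $|G \cap \{n-q+j, \ldots, n-1\}| = m - s$ forces $|F| \le (q+1-j) - (m-s)$. Substituting these into the required inequality $q(\min F - s) \ge p|F|$ and rearranging yields
\[
q(n - q - p) + (p+q)(j - s) + p(m - 1) \ge 0,
\]
which holds term by term because $n \ge p+q$, $s \le j$, and $m \ge 1$. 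This will close the argument.
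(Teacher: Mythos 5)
Your proposal is correct and follows essentially the same route as the paper: both use the canonical order-preserving ``collapse the gaps'' bijection $\{1,\ldots,n\}\setminus G \to \{1,\ldots,n-|G|\}$ and reduce the lemma to checking that the condition $q\min F \ge p|F|$ transfers, splitting on whether $\min F$ sits below or inside the window $\{n-q,\ldots,n-1\}$. The only difference is cosmetic: in the nontrivial case the paper uses the cruder bounds $|F|\le q$ and $\min\phi_G(F)\ge n-q$ together with $pq\le q(n-q)$, whereas you verify the sharper inequality $q(n-q-p)+(p+q)(j-s)+p(m-1)\ge 0$ term by term; both rest on the same facts $n\ge p+q$ and $s\le j$.
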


\begin{proof}
Fix $G$ and let $\psi_G:\{1,\ldots,n \} \setminus G\to \{1,\ldots,n-|G| \}$ be the unique increasing bijection.
Define $\phi_G: A_G \to S_{n-|G|}^{p/q}$ by
$$\phi_G(F)\ =\ \{\psi_G(i): i \in F\}.$$
Showing that $\phi_G$ is a bijection is straightforward but quite technical. 

First, we show that $\phi_G$ is well-defined; that is, the range of $\phi_G$ is $S_{n-|G|}^{p/q}$. Let $F\in A_G$. By definition, $n=\max F$ and so, $n-|G|= \max \phi_G(F)$. Note that $p|\phi_G(F)|=p|F|$. If $\min F < \min G$, then $\min \phi_G(F)= \min F$, and in this case,
$$p|\phi_G(F)|\ =\ p|F|\ \le\  q\min F\ =\ q\min \phi_G(F),$$ as desired. Otherwise $\min F>\min G$. In this case, $\min \phi_G(F)\ge n-q$ and $|F|\le q$ since $G\subset \{n-1, \ldots,n-q\}$, $F \cap G=\emptyset$ and is $G$ non-empty. Since $n \ge p+q$, we have
$$p|\phi_G(F)|\ =\ p|F|\ \le\ pq \ \le\ q(n-q)\ \le\  q \min\phi_G(F).$$
This is the desired result.

The injectivity of $\phi_G$ follows immediately from the definition of $\psi_G$ and $\phi_G$. It remains to show that 
$\phi_G$ is surjective. Fix $H\in S_{n-|G|}^{p/q}$. Define 
$$F\ =\ \{\psi_G^{-1}(i): i \in H\}.$$
By definition, $\phi_G(F)=H$ and $F\cap G=\emptyset$. Note that $\max F=n$ and 
$$p|F|\ =\ p|H|\ \le\  q\min H\ =\ q\psi_G(\min F) \ \le\ q \min F.$$
This finishes the proof of the lemma.
\end{proof}

\begin{proof}[Proof of Theorem \ref{mt1}] 
Using notation from Lemma \ref{l3}, the set $S^{p/q}_{n} \setminus \cup_{i=1}^q A_{\{n-i\}}$ is 
$$A\ :=\ \{F \in S^{p/q}_n: \{n-q, \ldots, n-1\} \subset F\}.$$
We claim that $|A|=|S^{p/q}_{n-(p+q)}| $. The bijection $\phi:A \to S^{p/q}_{n-(p+q)}$ is defined by
$$\phi(F)\ =\  (F\setminus\{n-q+1, \ldots,n \})-p.$$
Note first that $n-q =\max (F\setminus\{n-q+1, \ldots,n \})$ and so $n-(p+q) = \max \phi(F)$. In addition we have
$$p|\phi(F)|\ =\ p(|F|-q)\ \le\ q\min F-pq\ =\ q(\min F-p)\ =\ q \min \phi(F).$$
Therefore $\phi(F) \in S^{p/q}_{n-(p+q)}$. To see that $\phi$ is injective is trivial. We show that $\phi$ is surjective.
Let $H \in S^{p/q}_{n-(p+q)}$ and define $F=(H+p)\cup\{n-q+1,\ldots,n\}$. Then $\phi(F)=H$ and $F \in A$, since 
$$p|F|\ =\ p(|H|+q)\ \le\  q(\min H + p)\ = \ q \min F.$$

Let $\mathcal{G}_i=\{G \subset \{n-q, \ldots, n-1\}: |G|=i\}.$ By the inclusion-exclusion principle and Lemma \ref{l3}, we obtain
\begin{equation}
    \begin{split}
|S^{p/q}_{n}| & \ =\ |A| + \sum_{G \in \mathcal{G}_1} |A_G| -  \sum_{G \in \mathcal{G}_2} |A_G| + \sum_{G \in \mathcal{G}_3} |A_G| - \cdots +(-1)^{q+1} \sum_{G \in \mathcal{G}_q} |A_G| \\
& \ =\ |S^{p/q}_{n-(p+q)}| +\binom{q}{1} |S^{p/q}_{n-1}|- \binom{q}{2} |S^{p/q}_{n-2}| +\cdots + (-1)^{q+1}\binom{q}{q} |S^{p/q}_{n-q}|.
\end{split}
\end{equation}
This is the desired result. 
\end{proof}

\section{Proof of Theorem \ref{mt2}}

The following is well-known and can, for example, be found on the Wikipedia page for Tur\'an graphs. 

\begin{lem}\label{l1}
For all $(n, p)\in \mathbb{N}^2$ with $n>p$, it holds that
\begin{equation}\label{e1}T(n, p) \ =\ \frac{p-1}{2p}(n^2-q^2) + \binom{q}{2},\end{equation}
where $q := n - p\lfloor n/p\rfloor$.
\end{lem}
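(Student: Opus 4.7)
The plan is to prove the formula by the standard edge-counting argument for complete multipartite graphs. I start by writing $n = ap + q$ where $a = \lfloor n/p \rfloor$ and $0 \le q < p$, so that $q = n - p\lfloor n/p\rfloor$ matches the lemma's definition. Since the Tur\'an graph $T(n,p)$ distributes $n$ vertices into $p$ parts whose sizes differ by at most $1$, this forces exactly $q$ parts of size $a+1$ and $p-q$ parts of size $a$.

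Next I count edges by complementation. An edge is present if and only if its two endpoints lie in different parts, so
\[
T(n,p) \ =\ \binom{n}{2} - q\binom{a+1}{2} - (p-q)\binom{a}{2}.
\]
This is the crux of the argument; the remaining work is purely algebraic.

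From here I would expand and collect terms, using the relation $pa = n-q$ to eliminate $a$ in favor of $n$ and $q$. The sum $q\binom{a+1}{2} + (p-q)\binom{a}{2}$ simplifies to $\frac{a(pa - p + 2q)}{2} = \frac{pa(a-1)}{2} + qa$, and after substituting $pa = n-q$ and $pa^2 = a(n-q) = (n-q)^2/p$, the expression $\binom{n}{2}$ minus this quantity collapses to $\frac{(p-1)(n^2-q^2)}{2p} + \frac{q(q-1)}{2}$, which is exactly $\frac{p-1}{2p}(n^2-q^2) + \binom{q}{2}$.

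There is no real obstacle here: the main combinatorial fact (the part sizes of $T(n,p)$) follows directly from the definition, and the rest is routine manipulation. Since the paper already attributes this to standard references, I would keep the write-up short, presenting the edge count and then verifying the algebraic identity $\binom{n}{2} - q\binom{a+1}{2} - (p-q)\binom{a}{2} = \frac{p-1}{2p}(n^2-q^2) + \binom{q}{2}$ in a single display.
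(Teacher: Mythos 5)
Your proposal is correct. The paper gives no proof of this lemma at all --- it simply records the formula as well-known and points to a standard reference --- so there is nothing to diverge from; your derivation is exactly the standard one those references use. The two key steps both check out: writing $n = ap+q$ with $a=\lfloor n/p\rfloor$ does force $q$ parts of size $a+1$ and $p-q$ parts of size $a$, and the algebra
\[
\binom{n}{2} - q\binom{a+1}{2} - (p-q)\binom{a}{2} \ =\ \frac{n^2 - q - \frac{n^2-q^2}{p}}{2} \ =\ \frac{p-1}{2p}(n^2-q^2) + \binom{q}{2}
\]
is verified by substituting $pa=n-q$, so your write-up would stand as a complete self-contained proof of the lemma.
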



\begin{lem}\label{l2}We use $a \wedge b$ to indicate $\min \{a, b\}$. 
For all $(n,p)\in \mathbb{N}^2$, it holds that 
\begin{align}\label{e2}Sr(n,p) &\ =\  \sum_{m=1}^n pm \wedge (n+1-m) \\
&\ =\ \label{e3}\begin{cases}1 &\mbox{ if } n=1,\\ \binom{n+1}{2} &\mbox{ if } p > n \ge 2,\\ \frac{1}{2}\left(p(\Delta+1)\Delta + (n-\Delta+1)(n-\Delta)\right) &\mbox{ if }p\le n,\end{cases}\end{align}
where $\Delta = \lfloor (n+1)/(p+1)\rfloor$.
\end{lem}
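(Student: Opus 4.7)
The plan is to parameterize each interval by its minimum $m$ and maximum $M$, convert the Schreier-type condition into a linear inequality in $m$ and $M$, and then split the sum into two pieces depending on which of the two linear bounds is tighter.

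More concretely, an interval $F\subseteq[n]$ is determined by integers $1\le m\le M\le n$, with $|F|=M-m+1$. The condition $p\min F\ge|F|$ reads $pm\ge M-m+1$, i.e.\ $M\le (p+1)m-1$. For fixed $m$, the admissible values of $M$ form the set $\{m,m+1,\dots,\min(n,(p+1)m-1)\}$, whose cardinality is $\min(n,(p+1)m-1)-m+1=\min(pm,\,n+1-m)$. Summing over $m\in\{1,\dots,n\}$ immediately yields \eqref{e2}. This is the substance of the first equality; everything after is bookkeeping.

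For \eqref{e3} I would determine the crossover between the two terms inside the minimum. One has $pm\le n+1-m$ iff $m(p+1)\le n+1$ iff $m\le (n+1)/(p+1)$, so $pm\le n+1-m$ precisely when $m\le\Delta:=\lfloor(n+1)/(p+1)\rfloor$. Hence
\[
Sr(n,p)\;=\;\sum_{m=1}^{\Delta}pm\;+\;\sum_{m=\Delta+1}^{n}(n+1-m).
\]
The first sum equals $p\Delta(\Delta+1)/2$, and re-indexing the second by $k=n+1-m$ turns it into $\sum_{k=1}^{n-\Delta}k=(n-\Delta)(n-\Delta+1)/2$. Combining these gives the third line of \eqref{e3}.

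It remains to check the two degenerate cases. When $n=1$ the sum has the single term $\min(p,1)=1$. When $p>n\ge 2$, we have $p+1>n+1$, so $\Delta=0$, and the formula collapses to $\sum_{m=1}^{n}(n+1-m)=\binom{n+1}{2}$, which also matches the general third-line formula at $\Delta=0$, so the three cases are actually consistent. I expect no serious obstacle: the only thing to be careful about is the split point $\Delta$ and the boundary behaviour when $\Delta=0$ or $\Delta=n$, and these are handled uniformly by the computation above.
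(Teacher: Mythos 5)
Your proposal is correct and follows essentially the same route as the paper: parameterize each interval by its minimum and one other datum (you use the maximum $M$, the paper uses the cardinality $c=|F|$, which is equivalent), obtain the count $\min(pm,\,n+1-m)$ for each fixed $m$, and split the sum at the crossover index $\Delta$. Your additional observations about the degenerate cases and the consistency of the three lines at $\Delta=0$ are accurate.
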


\begin{proof}
We build a set $F\subset [n]$ that satisfies: 1) $p\min F\ge |F|$ and 2) $F$ is an interval. We denote the smallest element of $F$ by $m$, which can be chosen from $1$ to $n$. Once $m$ is fixed, we choose $c: = |F|$, which must satisfy $c \le pm$ and $c+m-1\le n$. The latter condition is to guarantee that $\max F\le n$. Once $m$ and $c$ are chosen, then $F$ is unique since it is an interval.
We obtain the formula for $Sr(n,p)$
\begin{equation*}
   Sr(n,p)\ =\  \sum_{m=1}^n \sum_{c=1}^{pm\wedge (n+1-m)}1\ =\ \sum_{m=1}^n pm \wedge (n+1-m),
\end{equation*}
which is \eqref{e2}. 

We now derive \eqref{e3}:
\begin{enumerate}
    \item By \eqref{e2}, $Sr(1, p) = p\wedge 1 = 1$.
    \item When $p > n\ge 2$, we have $pm\wedge (n+1-m) = n+1-m$ and so,
$$Sr(n,p) \ =\ \sum_{m=1}^n (n+1-m)\ =\ \binom{n+1}{2}.$$
    \item When $p \le n$, we have
    \begin{align*}
        \sum_{m=1}^n pm \wedge (n+1-m)&\ =\ \sum_{m=1}^{\Delta} pm  + \sum_{m=\Delta + 1}^{n+1} (n+1-m)\\
        &\ =\ \frac{1}{2}p(1+\Delta)\Delta + \frac{1}{2}(n-\Delta)(n-\Delta+1).
    \end{align*}
\end{enumerate}
This proves \eqref{e3}.
\end{proof}

\begin{proof}[Proof of Theorem \ref{mt2}]
We prove $Sr(n, p) \ =\ T(n+1, p+1)$ for all $(n,p)\in \mathbb{N}^2$ with $n\ge p$. If $n = p$, then by definitions, $T(n+1, p+1) = \binom{n+1}{2}$, and $Sr(n,p) = n+(n-1)+\cdots+1 = \binom{n+1}{2}$.
If $n>p$, by Lemmas \ref{l1} and \ref{l2}, we want to show that 
$$\frac{p((n+1)^2-q^2)}{2(p+1)} + \binom{q}{2}\ =\ \frac{1}{2}\left(p(\Delta+1)\Delta + (n-\Delta+1)(n-\Delta)\right),$$
which is equivalent to 
\begin{align}\label{e5}p\Delta(2n+2-(p+1)\Delta) + (n+1-(p+1)\Delta)&(n-(p+1)\Delta)\nonumber \\
&\ =\ p\Delta(\Delta+1) + (n-\Delta+1)(n-\Delta).\end{align}
Simple algebraic manipulation of the three variables $p, n$, and $\Delta$ confirms that two sides of \eqref{e5} are equal. This completes our proof. 
\end{proof}

It would be interesting to see a proof of Theorem \ref{mt2} which gives an explicit bijection between the edges of $T(n+1,p+1)$ and the elements of the set $$\left\{F\subset [n]\,:\, p\min F\ge |F|\mbox{ and }F \mbox{ is an interval}\right\}.$$

\ \\

\noindent MSC2020: 11B39 (primary), 11B37 (secondary)


\begin{thebibliography}{9999}
\bibitem{B} A. Bird, Schreier sets and the Fibonacci sequence, 

\url{https://outofthenormmaths.wordpress.com/
2012/05/13/jozef-schreier-schreier-sets-and-the-fibonacci-sequence/}.
\bibitem{C1} H. V. Chu, The Fibonacci sequence and Schreier-Zeckendorf sets, \textit{J. Integer Seq.} \textbf{22} (2019).
\bibitem{C2} H. V. Chu, S. J. Miller, and Z. Xiang, Higher order Fibonacci sequences from generalized Schreier sets, \textit{Fibonacci Quart.} \textbf{58} (2020), 249--253.
\bibitem{C3} H. V. Chu, Partial sums of the Fibonacci sequence,  \textit{Fibonacci Quart.} \textbf{59} (2021), 132--135.
\bibitem{M} P. J. Mahanta, Partial sums of the Gibonacci sequence, preprint. Available at: \url{https://arxiv.org/pdf/2109.03534.pdf}.
\bibitem{S} J. Schreier, Ein gegenbeispiel zur theorie der schwachen konvergentz, \textit{Studia Math.} \textbf{2}
(1962), 58--62.
\bibitem{Sln} Sloane, N. J. A. et al. (2021). \textit{The On-Line Encyclopedia of Integer Sequences}, Available at \url{https://
oeis.org}.
\bibitem{T} P. Tur\'{a}n, Eine extremalaufgabe aus der graphentheorie, \textit{Mat. Fiz Lapook} \textbf{48} (1941), 436--452.


\end{thebibliography}
\end{document}